\newcommand{\gettikzxy}[3]{%
  \tikz@scan@one@point\pgfutil@firstofone#1\relax
  \edef#2{\the\pgf@x}%
  \edef#3{\the\pgf@y}%
  }
\newcommand{\tikzAngleOfLine}{\tikz@AngleOfLine}
  \def\tikz@AngleOfLine(#1)(#2)#3{%
  \pgfmathanglebetweenpoints{%
    \pgfpointanchor{#1}{center}}{%
    \pgfpointanchor{#2}{center}}
  \pgfmathsetmacro{#3}{\pgfmathresult}%
  }
\newcommand{\be}{\begin{otherlanguage}{english}}
\newcommand{\ee}{\end{otherlanguage}}
\theoremstyle{definition}
\newtheorem{defn}{Definition}
\newtheorem{que}{Question}
\theoremstyle{plain}
\newtheorem{lem}{Lemma}
\newtheorem{prop}{Proposition}
\newtheorem{thm}{Theorem}
\newtheorem*{thm*}{Theorem}
\theoremstyle{remark}
\numberwithin{equation}{subsection}
\newcommand{\beq}{\begin{equation}}
\newcommand{\eeq}{\end{equation}}
\newcommand{\N}{\mathbb{N}}
\newcommand{\Z}{\mathbb{Z}}
\newcommand{\Q}{\mathbb{Q}}
\def\Id{{\rm Id}}
\newcommand{\norm}[1]{\| #1\|}
\def\a{          \alpha}
\def \C{{\mathbb C}}
\def \R{{\mathbb R}}
\def \Z{{\mathbb Z}}
\def \N{{\mathbb N}}
\newcommand{\T}{{\mathbb T}}
\newcommand{\prf}{{\begin{proof}}}
\newcommand{\epf}{{\end{proof}}}
\newcommand{\ary}{\begin{eqnarray}}
\newcommand{\eary}{\end{eqnarray}}
\newcommand{\aryst}{\begin{eqnarray*}}
\newcommand{\earyst}{\end{eqnarray*}}
\newcommand{\enmt}{\begin{enumerate}}
\newcommand{\eenmt}{\end{enumerate}}
\theoremstyle{definition}
\def\bee{\begin{equation}}
\def\eee{\end{equation}}
\theoremstyle{rema}
\newtheorem{rema}{\sc Remark}
\newcommand{\Diff}{\text{Diff}}
\numberwithin{equation}{section}
\begin{document}
\title{A QUESTION OF NORTON-SULLIVAN in the analytic case}

\author{Jian Wang}
\address{IMPA, Estrada Dona Castorina, 110, 22460-320, Rio de Janeiro, Brazil}\email{jian.wang@impa.br}
\thanks{J. Wang acknowledges the support of CNPq- Brazil (Bolsa de P\'os-Doutorado J\'unior) and H. Yang  acknowledges the support of FAPERJ- Brazil (Programa Mestrado Nota 10). }

\author{Hui Yang}
\address{IMPA, Estrada Dona Castorina, 110, 22460-320, Rio de Janeiro, Brazil}
\email{rengyanghui@gmail.com}

\date{Sep. 17, 2018}
\maketitle
\begin{abstract}
In 1996, A. Norton and  D. Sullivan  asked the following question:
If $f:\mathbb{T}^2\rightarrow\mathbb{T}^2$ is a diffeomorphism, $h:\T^2\rightarrow\T^2$ is a continuous map homotopic to the identity, and $h f=T_{\rho} h$ where $\rho\in\R^2$ is a totally irrational vector and $T_{\rho}:\mathbb{T}^2\rightarrow\mathbb{T}^2,\, z\mapsto z+\rho$ is a translation, are there natural geometric conditions (e.g. smoothness) on $f$ that force $h$ to be a homeomorphism?
In \cite{WZ}, the first author and  Z. Zhang gave a negative answer to the above question in the $C^{\infty}$ category:  In general, not even the infinite smoothness condition can force $h$ to be a homeomorphism. In this article, we give a negative answer in the $C^{\omega}$ category: We construct a real-analytic conservative and minimal totally irrational pseudo-rotation of $\T^2$ that is semi-conjugate to a translation but not conjugate to a translation, which simultaneously answers a question raised in \cite[Q3]{WZ}.  
\end{abstract}

\section{Introduction}

As one of the earliest results, H. Poincar\'{e} proved the following celebrated classification of circle homeomorphisms:  a circle homeomorphism $f$ is semi-conjugate to an irrational  rigid rotation if and only if  the rotation number of $f$, denoted by $\rho(f)$, is irrational, which is equivalent to say that $f$ has no periodic orbits. 
Later, A. Denjoy proved that $f$ is topologically conjugate to an irrational rigid rotation if it is a $C^1$ diffeomorphism of $\T^1$ without periodic points and $Df$ has bounded variation ($f\in C^{1+b.v.}$) \cite{D}.  In the other direction, Denjoy (even before him, P. Bohl \cite{Bo}) provided examples of $C^1$ diffeomorphisms semi-conjugate but not topologically conjugate to an irrational rotation. Their examples were later improved to $C^{1+\alpha}$ for any $\alpha\in(0,1)$  by Herman  \cite{H}.  

It is natural to explore the Denjoy's results (Denjoy Theorem and Denjoy counter-examples) on higher dimensional tori. It is the motivation for a line of research on the extension of the Denjoy's type example of the circle to $\T^2$. To construct a Denjoy counter-example on the circle, one starts with an irrational rotation and blows up the orbit of some point to get an orbit of wandering intervals. Inspired by this, one motivating question is the \textit{wandering domains problem} (see \cite{NS}): 
Can one ``\,blow up\,'' one or more orbits of $T_{\alpha}$ to make a smooth diffeomorphism with wandering domains?  
We say that a homeomorphism of $\T^2$ is of Denjoy type if it is obtained by blowing-up finitely many orbits of an irrational translation.  P. McSwiggen in \cite{McS} constructed a $C^{2+\alpha}$ diffeomorphism of Denjoy type having a smooth wandering domain. In particular, his example is not topologically conjugate to a rigid translation.
Norton and Sullivan in \cite{NS} showed that there dose not exist  $C^3$ diffeomorphism on $\T^2$ of Denjoy type with circular wandering domains, and asked the following question:
\begin{que}[Norton and Sullivan, 1996]\label{qnortonsullivan} 
If $f:\mathbb{T}^2\rightarrow\mathbb{T}^2$ is a diffeomorphism, $h:\T^2\rightarrow\T^2$ is a continuous map homotopic to the identity, and $h f=T_{\rho} h$ where $\rho\in\R^2$ is a totally irrational vector, are there natural geometric conditions (e.g. smoothness) on $f$ that force $h$ to be a homeomorphism?
\end{que}
\noindent In \cite{PS}, A. Passeggi and M. Sambarino  also mentioned the question: whether there exists $r$ so that if $f : \T^2 \to \T^2$ is a $C^r$ diffeomorphism semi-conjugate to an ergodic translation, then $f$ is conjugate to it.  For more recent developments, we mention \cite{Kara, Nav, M, WZ}.

In \cite{WZ}, the first author and  Zhang constructed a smooth diffeomorphism which is isotopic to the identity and semi-conjugate to a minimal translation $T_\a$, but not conjugate to $T_\a$, which is a $C^\infty$ counter-example to the Norton-Sullivan's question. The construction in \cite{WZ} combined the classical Anosov-Katok method (see \cite{AK,FK}) with J\"ager's theorem \cite{Jage} (see Theorem \ref{thm:J} below). In this article, we will construct  a $C^{\omega}$ counter-example to the question of Norton-Sullivan. Our strategy in this paper mainly follows from the \textit{approximation by conjugation construction scheme} in the proof of  Theorem 4 in \cite{WZ}. However, as we require the map is real-analytic, we will apply certain technique of analytic approximations in \cite{B,BSK} to customize the desirable analytic conjugacies. Our main theorem is the following:

\begin{thm}\label{thm example}
For any integer $d \geq 2$, there exists a $C^{\omega}$ area-preserving and minimal map $f : \T^d \to \T^d$ which is semi-conjugate to a minimal translation by a map homotopic to the identity, but is not topologically conjugate to a translation. \end{thm}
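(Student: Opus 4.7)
The plan is to follow the \emph{approximation by conjugation} scheme of Anosov--Katok, but executed entirely within the real-analytic category. I would construct $f$ as the limit of a sequence
\[
f_n = H_n \circ T_{\alpha_n} \circ H_n^{-1}, \qquad H_n = h_1 \circ h_2 \circ \cdots \circ h_n,
\]
where $\alpha_n \in \Q^d$ is a carefully chosen sequence of rational vectors converging to a totally irrational target $\alpha \in \R^d$, and each $h_n$ is a $C^\omega$ area-preserving diffeomorphism of $\T^d$. To make the sequence $f_n$ Cauchy in the analytic topology, I would insist that $h_{n+1}$ commute with $T_{\alpha_n}$; the conjugation then exactly preserves $T_{\alpha_n}$, so $f_{n+1}$ differs from $f_n$ only by the effect of replacing $T_{\alpha_n}$ by $T_{\alpha_{n+1}}$ inside $H_{n+1}$, and this can be made arbitrarily small by choosing $\alpha_{n+1}$ close enough to $\alpha_n$ (how close depends on the analytic norm of $H_{n+1}$). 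Area-preservation, minimality of the limit, and the totally irrational rotation vector of the limit translation are then forced by standard AK-type bookkeeping.

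The analytic ingredient is the construction of the $h_n$. Since bump functions are unavailable in $C^\omega$, I would use the analytic approximation tools of \cite{B,BSK}: given a smooth area-preserving diffeomorphism supported in a prescribed region and commuting with $T_{\alpha_n}$, one can approximate it arbitrarily well by a real-analytic area-preserving diffeomorphism that still commutes with $T_{\alpha_n}$ and is defined on a complex strip of some width $\rho_n > 0$. The width $\rho_n$ will shrink at each stage, so the essential bookkeeping is to choose a summable schedule (for example, $\rho_n \to \rho_\infty > 0$) so that the limit $f$ admits a genuine analytic extension to a strip of positive width. This is the main technical obstruction: the radii of analyticity, the $C^\omega$ norms of $h_n$, and the closeness of $\alpha_{n+1}$ to $\alpha_n$ must all be interlocked so that both the map $f = \lim f_n$ and a suitable semi-conjugacy survive in the limit.

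The non-injectivity of the semi-conjugacy is engineered during the choice of the $h_n$. Each $h_n$ is chosen so that, inside a fundamental domain of $T_{\alpha_{n-1}}$, it ``blows up'' a small ball onto a larger region. Because $h_n$ commutes with $T_{\alpha_{n-1}}$, this blow-up is replicated along the whole orbit, producing in the limit an invariant collection of Cantor-like sets along which the candidate semi-conjugacy $h := \lim H_n^{-1}$ is constant on non-trivial fibers. Quantitative control of the sizes of these blow-ups (shrinking at each step) ensures the constructed set has empty interior, which, combined with ergodicity of $T_\alpha$ and the AK scheme, gives minimality of $f$. Finally, Theorem~\ref{thm:J} (J\"ager's theorem) identifies the limit object $h$ as \emph{the} semi-conjugacy $hf = T_\alpha h$ homotopic to the identity, so $h$ is not injective and $f$ cannot be conjugate to $T_\alpha$.

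The hard step will be the simultaneous control of the analytic radii and the blow-up scales: one needs $h_n$ to be aggressive enough to produce a nontrivial collapsing set in the limit, yet tame enough in $C^\omega$ that the compositions $H_n$ and the conjugates $f_n$ remain analytic on a uniform strip. All the other ingredients (area-preservation, minimality, totally irrational rotation vector, application of J\"ager's theorem) are then routine adaptations of the corresponding steps of \cite{WZ}, replacing smooth cut-off constructions by the analytic approximations of \cite{B,BSK}.
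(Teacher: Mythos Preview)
Your overall plan---Anosov--Katok with $f_n = H_n T_{\alpha_n} H_n^{-1}$, $h_{n+1}$ commuting with $T_{\alpha_n}$, analytic conjugacies built from the block-slide approximations of \cite{B,BSK}, and J\"ager's theorem for the semi-conjugacy---is exactly the framework the paper uses. The difference lies in how the failure of conjugacy is certified, and there is one ingredient missing from your outline.

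The paper does \emph{not} attempt to realise the semi-conjugacy as $h=\lim H_n^{-1}$, nor does it exhibit non-trivial fibers directly. Instead it engineers the $h_n$ (via Lemma~\ref{lem:h n}, an analytic version of \cite[Lemma~6]{WZ}) so that the limit $f$ satisfies: for every $\varepsilon>0$ there exist $x,y\in\T^2$ with $d(x,y)<\varepsilon$ and an integer $N$ with $d(f^N(x),f^N(y))\geq\tfrac{1}{1000}$. Since any homeomorphic conjugate of a translation on a compact torus is equicontinuous, this alone rules out conjugacy. The semi-conjugacy then comes \emph{abstractly} from Theorem~\ref{thm:J}, not from a limit of the $H_n^{-1}$; the paper never claims that limit exists.

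For Theorem~\ref{thm:J} to apply one needs bounded mean motion, and this is the piece you do not mention. The paper secures it by imposing $d_{C^0}(\tilde h_n,\Id_{\R^2})<2^{-n}$ on the lifts at every stage (item~(4) of Lemma~\ref{lem:h n}), so that $d_{C^0}(\tilde H_n,\Id_{\R^2})<1$ uniformly in $n$; the bound \eqref{eq:bmm} then passes to the limit $F$. This $C^0$-smallness of $\tilde h_n$ is a genuine constraint that must coexist with the stretching you want from the conjugacy, and the paper's lemma balances the two (the displacement is bounded by $2d(x_n,y_n)+4/(Nq)$). Your alternative route---making $H_n^{-1}$ converge to a map with visibly collapsed fibers---would require extra control the paper never sets up; the sensitivity argument is both cheaper and sufficient.
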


The classical Anosov-Katok method is a major source constructing examples of smooth dynamical systems with prescribed properties. This method is well known and was applied by many authors to construct different examples which satisfy some desired properties, e.g. ergodic, mixing, minimal, etc (see, e.g. \cite{AK,S,FK,FKs,BSK}). We would not want to restate this scheme in our article and instead, we recommend the classical articles \cite{AK,FK}. The conjugation by approximation construction (i.e. the Anosov-Katok method) is essentially nonlinear and it is based on the convergence of maps obtained from certain standard maps by wildly diverging conjugacies. There is a great difference between the differentiable and real-analytic maps becomes apparent (see \cite[Section 7.2]{FK} for the explanation). Hence, one will meet additional difficulties when one considers to construct examples of real-analytic diffeomorphisms by using this method. One possible way to overcome such difficulties is to work on some manifolds which have a large collection of real-analytic diffeomorphisms with some good properties, and whose singularities are uniformly bounded away from a complex neighborhood of the real domain (see, e.g. B. Fayad and A.B. Katok \cite{FKs} work on odd-dimensional spheres).  In our situation, we will work on the torus and use a trick on the approximation by conjugation scheme appeared in \cite{B,BSK} recently, which trick that can be traced earlier to Katok \cite{K}.

We give some remarks about  our theorem. 
As the constructions in \cite{WZ} and in this article, we use the classical Anosov-Katok method, the rotation vector of the constructed map is Liouvillean, which is the price to pay in order to get the smoothness of the pseudo-rotation. It seems difficult to construct a pseudo-rotation as in Theorem \ref{thm example} with Diophantine rotation vector (see Section 2 for the definitions). On the other hand, by the classical KAM theory,  any $C^{\infty}$ volume-preserving pseudo-rotation of $\T^n$ with Diophantine rotation vector $\alpha \in \T^n$, which is sufficiently close to $T_{\alpha}$, is smoothly conjugate to $T_{\alpha}$. Hence, for the Norton-Sullivan question, except the smoothness condition, the arithmetic condition of the rotation vector of the pseudo-rotation is also vital. Therefore, we ask the following question:
\begin{que}
In Question \ref{qnortonsullivan}, if the vector $\rho$ is diophantine, is the Norton-Sullivan's question true?
\end{que}

\smallskip
This article is organized as follows. In Section 2, we introduce some notations, recall some classical definitions and results. In particular, we introduce the block-slide type of maps and their analytic approximations. In Section 3, we customize the analytic conjugacies  which is a key step to prove our main theorem. We prove the main theorem in Section 4. \smallskip
\bigskip

\section{Prelimary}
\subsection{The Misiurewicz-Ziemian rotation set} In this article, we  study homeomorphisms of the two-dimensional torus $\T^2 = \R^2 / \Z^2$ which are isotopic to the identity. In this case, the rotation vectors and the rotation set are defined as follows.

Let $\mathrm{Homeo}_*(\mathbb{T}^2)$ be the group of homeomorphisms of $\mathbb{T}^2$ which are homotopic to $\mathrm{Id}_{\mathbb{T}^2}$.\footnote{If a homeomorphism of $\T^2$ is homotopic to the identity, then it is  isotopic to the identity \cite[Theorem 6.4]{Eps}.}  Any $f \in \mathrm{Homeo}_*(\mathbb{T}^2)$ admits a lift to $\R^2$, denoted by $\tilde{f}$, which is a homeomorphism of $\R^2$ satisfying $\pi\tilde{f} = f\pi$, where  $\pi:\R^2\rightarrow \T^2$ is the covering projection. 

M. Misiurewicz and K. Ziemian \cite{MZ} introduced the following standard definition:
\begin{defn}Assume that $f\in \mathrm{Homeo}_*(\mathbb{T}^2)$ and that $\tilde{f}$ is a lift of $f$. The \textit{(Misiurewicz-Ziemian) rotation set} of $\tilde{f}$ is defined by:
\aryst
\rho(\tilde{f}) = \left\{ {\bf v} \in \R^2  \mid \frac{\tilde{f}^{n_i}(z_i)-z_i}{n_i} \to {\bf v},\mbox{for some } \{z_i\} \mbox{ in }
\mathbb{R}^2,\mbox{ and } \{n_i\} \mbox{ in }\N \mbox{ with }n_i \rightarrow \infty\right \}.
\earyst
\end{defn} 
The effect of changing the lift $\tilde{f}$ of $f$  is to translate $\rho(\tilde{f})$ by an integer vector.
In \cite{MZ}, the authors proved  that the rotation set $\rho(\tilde{f})$ is a compact convex subset of $\R^2$, giving rise to a basic trichotomy: $\rho(\tilde{f})$ is either a compact convex set with nonempty interior, a line segment, or a singleton. 
We say
that $f$ is a \emph{pseudo-rotation}  when  $\rho(\tilde{f})$ is a singleton. Moreover, we say a pseudo-rotation $f$ is \emph{totally irrational} if $\rho(\tilde{f})=(\rho(\tilde{f})_1,\rho(\tilde{f})_2)$ satisfies that $\rho(\tilde{f})_1,\rho(\tilde{f})_2\notin \Q$ and they are \textit{non-resonant (or rational independent)}, that is, for any $(a,b,c)\in\Z^3$ satisfying $a\rho(\tilde{f})_1+b\rho(\tilde{f})_2+c=0$ implies that $(a,b,c)=(0,0,0)$. 

For $\gamma, \sigma>0$, we define the set $\mathcal{D}(\gamma,\sigma)\subset \R^2$ of \textit{diophantine} vector with \textit{exponent} $\sigma$ and \textit{constant} $\gamma$ as the set of $\alpha=(\alpha_1,\alpha_2)\in\R^2$ such that 
$$\forall (k_1,k_2)\in\Z^2,\, |k_1\alpha_1+k_2\alpha_2|\geq\frac{\gamma}{(|k_1|+|k_2|)^\sigma}.$$ 
We set $\mathcal{D}(\sigma)=\bigcup_{\gamma>0}\mathcal{D}(\gamma,\sigma)$ and $\mathcal{D}=\bigcup_{\sigma>0}\mathcal{D}(\sigma)$. The set $\mathcal{D}$ is the set of \textit{Diophantine} vectors of $\R^2$ while its complement in the set of non-resonant vectors is called the set of \textit{Liouville} vectors, denoted it by $\mathcal{L}$. We note that the set $\mathcal{D}$ has full Lebesgue measure in $\R^2$ and the set $\mathcal{L}$ is $G_\delta$-dense in $\R^2$. In the same way, one can define all of the definitions above in higher dimensions.

\subsection{Semi-conjugation}
We denote by  $\|\cdot\|$  the Euclidean norm on $\mathbb{R}^2$ and by $d$ the standard  Euclidean metric. Let $\T^2$ be endowed with the  metric induced by the Euclidean metric on $\mathbb{R}^2$, we still denote it by $d$ without any confusion.

For $\a=(\a_1,\a_2)\in \R^2$, we define the transition $T_\a:\R^2 \rightarrow \R^2$ by
$T_\a(x,y)=(x+\a_1,y+\a_2)$ which naturally induces a translation on $\T^2$, we still denote it by $T_\a$ without any confusion.
Given a map  $f: \T^2 \rightarrow \T^2$, we say that $f$ is a \textit{semi-conjuagte} to a translation $T_\alpha$ if  there exists a surjective continuous map $h: \T^2 \rightarrow \T^2$, such that $hf=T_\a h$, moreover, if $h$ is a homeomorphism, we say that $f$ is \textit{conjugate} to a translation.

\begin{defn}\label{BDC} Let $f$ be a pseudo-rotation of $\T^2$. We say that  $f$ has \emph{bounded mean
motion } (with a bound $\kappa\geq0$) if there exists $\tilde{f}$, a lift of $f$,  such that   for any $ z\in
\mathbb{R}^2$ and $ n\in \mathbb{N}$,
\begin{equation}\label{eq:bmm}\|\tilde{f}^n(z)-z-n\rho(\tilde{f})\|\leq \kappa. \end{equation}  
\end{defn}


\begin{thm}\cite[proposition A]{Jage}\label{thm:J}
Suppose that $f \in \mathrm{Homeo}_*(\T^2)$ is a conservative minimal totally irrational pseudo-rotation with bounded mean motion. Then $f$ is  semi-conjugate to an irrational translation on $\T^2$ and the semi-conjugacy is homotopic to the identity.
\end{thm}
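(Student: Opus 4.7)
The plan is to build the semi-conjugacy by solving an explicit $\R^2$-valued cohomological equation over $f$, using the Gottschalk--Hedlund theorem. Fix a lift $\tilde{f}:\R^2\to\R^2$ realizing the rotation vector, set $\rho=\rho(\tilde{f})\in\R^2$, and define the displacement cocycle
$$ D(z) := \tilde{f}(z)-z-\rho. $$
Since $\tilde{f}-\mathrm{Id}$ is $\Z^2$-periodic, $D$ descends to a continuous function $D:\T^2\to\R^2$. A direct telescoping computation shows that the Birkhoff sum of $D$ under $f$ is precisely
$$ S_nD(z)=\sum_{k=0}^{n-1}D(f^kz)=\tilde{f}^n(z)-z-n\rho, $$
so the bounded mean motion hypothesis \eqref{eq:bmm} is exactly the statement that $\|S_nD(z)\|\le\kappa$ uniformly in $z\in\T^2$ and $n\ge 0$.

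The first main step is to apply the Gottschalk--Hedlund theorem component-wise to the two coordinates of $-D$: because $f$ is a minimal homeomorphism of the compact space $\T^2$ and the Birkhoff sums of each component are bounded at some (in fact every) point, each coordinate is a continuous coboundary, and assembling the two solutions yields a continuous map $\varphi:\T^2\to\R^2$ with
$$ \varphi\circ f-\varphi=-D. $$
Lifting $\varphi$ to a $\Z^2$-periodic $\tilde{\varphi}:\R^2\to\R^2$ and setting $\tilde{h}(z)=z+\tilde{\varphi}(z)$, I would compute
$$ \tilde{h}\circ\tilde{f}(z)=\tilde{f}(z)+\tilde{\varphi}(\tilde{f}(z))=\tilde{f}(z)+\tilde{\varphi}(z)-D(z)=z+\tilde{\varphi}(z)+\rho=\tilde{h}(z)+\rho, $$
so $\tilde{h}\circ\tilde{f}=T_\rho\circ\tilde{h}$. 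Passing to the quotient gives a continuous $h:\T^2\to\T^2$ with $hf=T_\rho h$, and since $\tilde{h}-\mathrm{Id}=\tilde{\varphi}$ is $\Z^2$-periodic, $h$ is homotopic to the identity.

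It remains to upgrade $h$ from ``continuous'' to ``surjective'', which is where I would invoke minimality and total irrationality together. Choose any $z_0\in\T^2$; by minimality of $f$ its forward orbit is dense in $\T^2$, hence
$$ h(\{f^n(z_0):n\ge 0\})=\{h(z_0)+n\rho:n\ge 0\} $$
is dense in $\T^2$ because $\rho$ is totally irrational and so its $T_\rho$-orbit is equidistributed. Continuity of $h$ and compactness of $\T^2$ then force $h(\T^2)$ to be closed and dense, hence equal to $\T^2$.

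The only genuine difficulty is checking the hypothesis of Gottschalk--Hedlund: the theorem is classically phrased for real-valued cocycles, so one must apply it coordinate-wise, but this is straightforward since each component of $S_nD$ inherits the uniform bound $\kappa$ from the bounded mean motion assumption. Note that in this direction only minimality, bounded mean motion, and total irrationality of $\rho$ are actually invoked; the conservative hypothesis is used elsewhere in Jäger's framework (for instance to establish bounded mean motion for natural classes of examples), but does not enter into this particular implication.
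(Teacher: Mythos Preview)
The paper does not give its own proof of this statement; Theorem~\ref{thm:J} is quoted verbatim from J\"ager \cite{Jage} and used as a black box. So there is nothing in the paper to compare your argument against.

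That said, your argument is correct and is essentially the standard route to this result. The key ingredients line up exactly: bounded mean motion is precisely uniform boundedness of the Birkhoff sums of the displacement cocycle $D=\tilde f-\mathrm{Id}-\rho$, minimality of $f$ is what makes Gottschalk--Hedlund applicable, and total irrationality of $\rho$ yields surjectivity via density of $T_\rho$-orbits. Your closing remark is also accurate: with minimality already assumed, the conservative hypothesis plays no role in this implication. (One could alternatively deduce surjectivity from the fact that $h$, being homotopic to the identity, has degree one; but your orbit-density argument is equally valid.)
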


\subsection{Analytic topology}
Any real-analytic diffeomorphim $f$ of $\T^2$ homotopic to the identity admits a lift $F: \R^2 \rightarrow \R^2$ which has the following form:
$$F(x_1,x_2)=(x_1+f_1(x_1,x_2), x_2+f_2(x_1+x_2)),$$
where $f_1,f_2$ are real analytic $\Z^2$ periodic functions. It can be extend to some neighborhood of $\R^2$ in $\mathbb{C}^2$. For any $\rho>0$, let
 $$B_\rho=\{(z_1,z_2)\in \mathbb{C}^2\mid ~|\mathrm{Im}(z_1)|,|\mathrm{Im}(z_2)|<\rho\},$$ and for a function $h$ defined on $B_\rho$, we  define  $$\norm{h}_\rho=\sup\limits_{(z_1,z_2)\in B_\rho} |h(z_1,z_2)|.$$
We define $C^\omega_\rho(\T^2)$ to be the space of all $\Z^2$-periodic real-analytic function on $\R^2$ that extends to a holomorphic function on $B_\rho$ and $\|h\|_\rho<\infty$.

Let $\lambda$ be the standard Lebesgue measure on $\T^2$. We denote by $\Diff_\rho^{\,\omega}(\T^2,\lambda)$ the space of all measure-preserving real-analytic diffeomorphism  of $\T^2$  homotopic to the identity, whose lift $F(x)=(x_1+f_1(x),x_2+f_2(x))$ to $\R^2$ satisfies $f_i\in C^\omega_\rho$ and we also require the lift $\tilde{F}(x)=(x_1+\tilde{f}_1(x),x_2+\tilde{f}_2(x))$ of its inverse to $\R^2$ to satisfy $\tilde{f}_i\in C^\omega_\rho$. 
For any $f,g\in\Diff_\rho^{\,\omega}(\T^2,\lambda)$, we define the distance
$$d_\rho(f,g)=\max\{\tilde{d}_\rho(f,g),\tilde{d}_\rho(f^{-1},g^{-1})\},$$
where
$$\tilde{d}_\rho(f,g)=\max\limits_{i=1,2} \left\{ \inf\limits_{k\in \Z}||f_i(z_1,z_2)-g_i(z_1,z_2)+k||_\rho\right\}.$$Finally, we define the space: $\Diff_\infty^{\,\omega}(\T^2,\lambda):=\bigcap_{n=1}^\infty \Diff_n^{\,\omega}(\T^2,\lambda)$. For more information about the analytic topology, we recommend the readers to refer to \cite{BSK,S}.

\subsection{Analytic approximations}
 In this subsection, we will introduce two lemmas. Given a special kind of step function with some periodic propriety (it is called \textit{block-slide type of maps} in \cite{B,BSK}, see (\ref{eq:s}) below), we can construct an explicit form of some analytic approximations of the function, which preserves the periodic propriety and satisfies some Lipschitz condition, that is crucial to the proof of the main result of this article.  The key point is that a block-slide type of map can be approximated extremely well by measure-preserving real-analytic diffeomorphisms outside a set of arbitrarily small measure, which is inspired by \cite{K}.

Let $q\in \N, N \in 2\N$,  and $\beta=(\beta_0,\cdots,\beta_{N-1})\in [0,1)^N$. Consider a step function of the form
\ary\label{eq:s}
\bar{s}_{\beta,q}:[0,1)\rightarrow \R \text{ defined by }\,\bar{s}_{\beta,q}=\sum_{j=0}^{Nq-1}\bar{\beta_j}\chi_{[j/Nq,(j+1)/Nq)}.\eary
Here, $\bar{\beta_j}:=\beta_k$, where $ k := j
\,(\mathrm{mod}\,  N)$. For any $\delta\in(0,1)$, we denote by $F_{q,N,\delta}$ the union of all intervals centered around $j/Nq\,(j\in \Z)$ with length $\delta/Nq$. For given $\epsilon\in (0,\frac{1}{8})$ and $\delta\in (0,1)$, we define $$A_0(\epsilon,\delta, N):=\max\left\{-\frac{2N}{\pi\cdot\delta}\cdot\ln(-\ln(1-\frac{\epsilon}{8})),\frac{2N}{\pi\cdot\delta}\cdot\ln(-\ln(\frac{\epsilon}{2N}))\right\}.$$ 
\begin{lem}\label{lem:aa} \cite[Lemmas 2.13, 2.18, 3.14]{BSK} For any $\epsilon\in (0,\frac{1}{8}),\delta\in (0,1)$ and any $A>A_0(\epsilon,\delta,N)$, we define the following $1/q$-periodic real-analytic map $\tilde{s}_{\beta,q,\epsilon,\delta,A}: \R\rightarrow \R $ as
\ary\label{eq:abc}
\tilde{s}_{\beta,q,\epsilon,\delta,A}(x)
&=& (\displaystyle{\sum_{j=0}^{N/2-1}}\beta_j (e^{-e^{-Asin2\pi(qx-j/N)}}-e^{-e^{-Asin2\pi(qx-(j+1)/N)}}))e^{-e^{-Asin2\pi qx}}\\
&+& (\displaystyle{\sum_{j=N/2}^{N-1}} \beta_j (e^{-e^{-Asin2\pi(qx-j/N)}}-e^{-e^{-Asin2\pi(qx-(j+1)/N)}}))e^{-e^{Asin2\pi qx}}\nonumber.
\eary
 The map $\tilde{s}_{\beta,q,\epsilon,\delta,A}$ has the following properties:
\enmt
\item[$(1).$] The complexification of $\tilde{s}_{\beta,q,\epsilon,\delta,A}$ extends holomorphically to $\C$;
\item[$(2).$] We have
$\sup_{x\in [0,1)\setminus F_{q,N,\delta}}|\tilde{s}_{\beta,q,\epsilon,\delta,A}(x)-\bar{s}_{\beta,q}(x)|< \epsilon;$
\item[$(3).$] The map $\tilde{s}_{\beta,q,\epsilon,\delta,A}$ is $\frac{1}{q}$-periodic. More precisely, the complexification of $\tilde{s}_{\beta,q,\epsilon,\delta,A}$ satisfies 
$$\tilde{s}_{\beta,q,\epsilon,\delta,A}(z+k/q)=\tilde{s}_{\beta,q,\epsilon,\delta,A}(z) \text{ for all } z\in \C \text{ and }k\in\Z;$$
\item[$(4).$] $\forall \rho>0$, there exist a constant $C(N,q,\epsilon,\delta,A,\rho)>0$ such that:
$$\sup_{z_1,z_2\in B_\rho}|\tilde{s}_{\beta,q,\epsilon,\delta,A}(z_1)-\tilde{s}_{\beta,q,\epsilon,\delta,A}(z_2)|\leq C|z_1-z_2|.$$
\eenmt

\end{lem}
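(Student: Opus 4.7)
The plan is to regard $\tilde{s}_{\beta,q,\epsilon,\delta,A}$ as a weighted superposition of approximate indicators of the subintervals $[j/Nq,(j+1)/Nq)$, with weights $\beta_{j\bmod N}$. The elementary building block is the entire function $\phi_A^{\pm}(t):=\exp(-\exp(\mp A\sin 2\pi t))$; for $A$ large, $\phi_A^-(t)$ is close to $1$ on the connected components of $\{\sin 2\pi t>0\}$ and close to $0$ on $\{\sin 2\pi t<0\}$, with the transition confined to small neighbourhoods of the zeros of $\sin 2\pi t$. The telescoping difference $\phi_A^-(y-j/N)-\phi_A^-(y-(j+1)/N)$ then behaves like an approximate indicator of the unique $N$-th sub-interval on which the fractional part of $y-j/N$ lies in $(0,1/2)$ while the fractional part of $y-(j+1)/N$ does not, and the outer factor $\phi_A^{\mp}(qx)$ switches off the first sum on $qx\bmod 1\in(1/2,1)$ and the second on $qx\bmod 1\in(0,1/2)$, so that the two sums together cover all $N$ possible values of $\bar{s}_{\beta,q}(x)$.

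Items $(1)$ and $(3)$ are immediate consequences of this structure. For $(1)$, each map $z\mapsto\sin 2\pi(qz-j/N)$ is entire, and $w\mapsto e^{-e^{-Aw}}$ is entire as a composition of exponentials; sums and products of entire functions are entire, so $\tilde{s}_{\beta,q,\epsilon,\delta,A}$ extends holomorphically to $\C$. For $(3)$, the substitution $z\mapsto z+k/q$ shifts every argument of every $\sin$ appearing in \eqref{eq:abc} by $2\pi k$, leaving each summand, and hence the whole expression, invariant.

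The main work is $(2)$. Fix $x\in[0,1)\setminus F_{q,N,\delta}$ and, using the $1/q$-periodicity just established, reduce to $qx\in[\ell/N,(\ell+1)/N)$ with $qx$ at distance at least $\delta/(2N)$ from every endpoint $j/N$, $j\in\Z$. The two elementary facts $e^{-e^{-Aw}}\geq 1-\epsilon/8$ whenever $Aw\geq-\ln(-\ln(1-\epsilon/8))$ and $e^{-e^{-Aw}}\leq\epsilon/(2N)$ whenever $-Aw\geq\ln(-\ln(\epsilon/(2N)))$, combined with the bound $\sin(\pi\delta/N)\geq\pi\delta/(2N)$, convert the hypothesis $A>A_0(\epsilon,\delta,N)$ into the pointwise estimates $|\phi_A^{\pm}(t)-1|\leq\epsilon/8$ on $\{\pm\sin 2\pi t\geq\sin(\pi\delta/N)\}$ and $|\phi_A^{\pm}(t)|\leq\epsilon/(2N)$ on $\{\mp\sin 2\pi t\geq\sin(\pi\delta/N)\}$. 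In the active sum, the single telescoping pair indexed by the $j$ with $qx-j/N\in(\delta/(2N),1/N-\delta/(2N))$ is within $O(\epsilon/N)$ of $1$, while each of the remaining $N/2-1$ pairs is within $O(\epsilon/N)$ of $0$; the inactive sum is killed globally by its outer factor. Weighting by $\beta_j\in[0,1)$ and summing with the triangle inequality yields $|\tilde{s}_{\beta,q,\epsilon,\delta,A}(x)-\bar{s}_{\beta,q}(x)|<\epsilon$.

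For $(4)$, the holomorphic extension of $\tilde{s}_{\beta,q,\epsilon,\delta,A}$ is $1/q$-periodic in the real direction, so both it and its derivative attain their suprema on the strip $\{z\in\C:|\mathrm{Im}\,z|<\rho\}$ already on the compact rectangle $[0,1/q]+i[-\rho,\rho]$. A Cauchy estimate in a slightly wider strip bounds the derivative there by some constant $C=C(N,q,\epsilon,\delta,A,\rho)$, and integrating $\tilde{s}'_{\beta,q,\epsilon,\delta,A}$ along the line segment from $z_1$ to $z_2$ gives the stated Lipschitz inequality. The principal obstacle of the proof is step $(2)$: carefully bookkeeping the $\epsilon/N$-level contributions coming from the two distinct accuracy regimes (the one driving $\phi_A^{\pm}$ within $\epsilon/8$ of $1$ and the one driving it within $\epsilon/(2N)$ of $0$) that are hard-wired into the definition of $A_0(\epsilon,\delta,N)$, and verifying that their combination stays below the target total error $\epsilon$.
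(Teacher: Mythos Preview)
The paper does not prove this lemma; it is quoted verbatim from \cite[Lemmas 2.13, 2.18, 3.14]{BSK}, so there is no ``paper's own proof'' to compare against. Your sketch is essentially a reconstruction of what the argument in \cite{BSK} must look like, and for items $(1)$, $(3)$ and $(4)$ it is correct: entirety follows because every ingredient is entire, $1/q$-periodicity is immediate from the $2\pi$-periodicity of $\sin$, and for $(4)$ your Cauchy-estimate argument on the compact rectangle $[0,1/q]\times i[-\rho,\rho]$ yields the existence of the Lipschitz constant. Note, however, that the paper later needs the \emph{explicit} value of $C$ given in Remark~\ref{rem:e} (it feeds into Lemma~\ref{lem:drho} and Remark~3), which comes from a direct chain-rule bound in \cite{BSK} rather than from a soft Cauchy estimate.

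For item $(2)$ your structural picture is right, but the quantitative claim that ``each of the remaining $N/2-1$ pairs is within $O(\epsilon/N)$ of $0$'' is not justified by what you wrote. The threshold built into $A_0$ gives $\phi_A^-\in(1-\epsilon/8,1)$ on the ``$\sin>0$'' side and $\phi_A^-\in(0,\epsilon/(2N))$ on the ``$\sin<0$'' side. For the inactive pairs with $j<\ell$ (both factors in the ``close to $1$'' regime), this only bounds $|\phi_j-\phi_{j+1}|$ by $\epsilon/8$, not by $\epsilon/N$; and there can be as many as $N/2-1$ such pairs, so a naive triangle-inequality sum gives an error of order $N\epsilon$, not $\epsilon$. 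To close this you would need a finer pointwise estimate exploiting the double-exponential flatness of $\phi_A^-$ away from the transition zones, which your sketch does not supply. This subtlety is irrelevant for the paper's actual use of the lemma (in the proof of Lemma~\ref{lem:h n} only a single $\beta_j$ is nonzero, so only one telescoping pair appears), but it is a gap in your argument for the lemma as stated with general $\beta\in[0,1)^N$.
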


\begin{rema}\label{rem:bound}Note that $0<e^{-e^t}<1$ for all $t\in \R$. It is obviously that $|\tilde{s}_{\beta,q,\epsilon,\delta,A}(z)| < \displaystyle{\sum_{i=0}^{N-1}} |\beta_i|.$
\end{rema}

\begin{rema}\label{rem:e}
We can take the constant  $C(N,q,\epsilon,\delta,A,\rho)$ in the item $(4)$ as $6\pi\cdot A\cdot N\cdot q\cdot e^{4\cdot e^{A \cdot e^{2\pi q\rho}}}$ (see the proof of Lemma 3.14 in \cite{BSK}). 
\end{rema}

For every $m\geq1$, we define $$A^{(m)}_0(\epsilon,\delta,N):=A_0(\frac{\epsilon}{4m},\delta,N) \text{ and } C^{(m)}(N,q,\epsilon,\delta,A,\rho):=4mC(N,q,\epsilon,\delta,A,\rho).$$ 

In order to prove our main theorem, we require that $\beta\in [-1,1)^N$. Hence, we give the following lemma.

\begin{lem}\label{R}Let $q\in \N, N \in 2\N$, and $\beta=(\beta_0,\cdots,\beta_{N-1})\in [-m,m)^N$ where $m\in \N_{\geq1}$.
Suppose that $\bar{s}_{\beta,q}$ is the step function defined in (\ref{eq:s}). Then for any $\epsilon\in (0,\frac{1}{8}),\delta\in (0,1)$ and any $A>A^{(m)}_0(\epsilon,\delta,N)$, the function $\tilde{s}_{\beta,q,\epsilon,\delta,A}$ defined in (\ref{eq:abc}) satisfies all of the properties of Lemma \ref{lem:aa} if we replace the constant $C$ in the property (4) by $C^{(m)}(N,q,\epsilon,\delta,A,\rho)$.
\end{lem}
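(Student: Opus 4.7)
The plan is to reduce Lemma \ref{R} to Lemma \ref{lem:aa} by exploiting the fact that both $\beta \mapsto \bar{s}_{\beta,q}$ and $\beta \mapsto \tilde{s}_{\beta,q,\epsilon,\delta,A}$ (for fixed $q, N, A$) are $\R$-linear maps: linearity of the step function (\ref{eq:s}) is obvious, and inspection of formula (\ref{eq:abc}) shows that each coefficient $\beta_j$ appears exactly linearly, multiplied by a function of $x$ that depends only on $q, N, A$. Consequently, any decomposition of $\beta \in [-m,m)^N$ into a signed sum of vectors in $[0,1)^N$ transfers all four conclusions of Lemma \ref{lem:aa} at the price of inflating the approximation error and the Lipschitz constant by the number of pieces (counted with signs).

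The simplest such decomposition is the two-term one. Setting $\gamma := (\beta + m\mathbf{1})/(2m)$ and $\mu := \mathbf{1}/2$, where $\mathbf{1} = (1,\ldots,1) \in \R^N$, one has $\gamma, \mu \in [0,1)^N$ and $\beta = 2m\gamma - 2m\mu$ componentwise. By linearity,
\[
\tilde{s}_{\beta,q,\epsilon',\delta,A} \;=\; 2m\,\tilde{s}_{\gamma,q,\epsilon',\delta,A} \;-\; 2m\,\tilde{s}_{\mu,q,\epsilon',\delta,A}, \qquad \bar{s}_{\beta,q} \;=\; 2m\,\bar{s}_{\gamma,q} \;-\; 2m\,\bar{s}_{\mu,q}.
\]
Taking $\epsilon' := \epsilon/(4m)$ and $A > A_0^{(m)}(\epsilon,\delta,N) = A_0(\epsilon',\delta,N)$, Lemma \ref{lem:aa} applies to both $\gamma$ and $\mu$ with parameter $\epsilon'$.

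Properties (1) (holomorphic extension to $\C$) and (3) ($1/q$-periodicity) are preserved by linear combinations, so they transfer immediately from the corresponding statements for $\tilde{s}_\gamma$ and $\tilde{s}_\mu$. For property (2), subtracting the two displayed identities and applying Lemma \ref{lem:aa}(2) to each of $\tilde{s}_\gamma$ and $\tilde{s}_\mu$ yields
\[
\bigl|\tilde{s}_{\beta,q,\epsilon',\delta,A}(x) - \bar{s}_{\beta,q}(x)\bigr| \;\leq\; 2m\epsilon' + 2m\epsilon' \;=\; 4m\epsilon' \;=\; \epsilon
\]
for every $x \in [0,1) \setminus F_{q,N,\delta}$. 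For property (4), the triangle inequality together with Lemma \ref{lem:aa}(4) applied to each piece gives a Lipschitz constant bounded by $2mC + 2mC = 4mC = C^{(m)}(N,q,\epsilon,\delta,A,\rho)$, as required.

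There is no genuine obstacle: the whole argument is bookkeeping built on linearity, and the choices of $4m$ in the definitions of $A_0^{(m)}$ and $C^{(m)}$ are precisely what is needed so that the two-piece decomposition above absorbs the rescaling by $2m$ that appears in both the error and the Lipschitz terms.
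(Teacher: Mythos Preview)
Your proof is correct and follows essentially the same approach as the paper's own proof: the paper also exploits linearity of $\beta\mapsto \bar{s}_{\beta,q}$ and $\beta\mapsto \tilde{s}_{\beta,q,\epsilon,\delta,A}$, and uses the same two-term decomposition (in the paper's notation, $\beta_0=(\tfrac12,\ldots,\tfrac12)$ and $\tfrac{1}{2m}\beta+\beta_0$, which are exactly your $\mu$ and $\gamma$). The only cosmetic difference is that you write $\tilde{s}_{\beta,q,\epsilon',\delta,A}$ with $\epsilon'=\epsilon/(4m)$, whereas the explicit formula (\ref{eq:abc}) shows the function itself depends only on $\beta,q,N,A$, so this is harmless.
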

\begin{proof}
Let $\beta_0=(\frac{1}{2},\cdots,\frac{1}{2})\in [0,1)^N$. Then $\frac{1}{2m}\beta+\beta_0\in [0,1)^N$. For any $\epsilon\in (0,\frac{1}{8}),\delta\in (0,1)$, applying $\frac{\epsilon}{4m},\delta$ and $A>A_0(\frac{\epsilon}{4m},\delta,N)$ to Lemma \ref{lem:aa}, we have
 $$\sup_{x\in [0,1)\setminus F_{q,N,\delta}}|\tilde{s}_{\frac{1}{2m}\beta+\beta_0,q,\epsilon,\delta,A}(x)-\bar{s}_{\frac{1}{2m}\beta+\beta_0,q}(x)|< \frac{\epsilon}{4m},$$
$$\sup_{x\in [0,1)\setminus F_{q,N,\delta}}|\tilde{s}_{\beta_0,q,\epsilon,\delta,A}(x)-\bar{s}_{\beta_0,q}(x)|< \frac{\epsilon}{4m}.$$
 Note that the maps $\beta \rightarrow \bar{s}_{\beta,q}, \tilde{s}_{\beta,q,\epsilon,\delta,A}$ are  linear. Hence, we get
\aryst
&&\sup_{x\in [0,1)\setminus F_{q,N,\delta}}|\tilde{s}_{\beta,q,\epsilon,\delta,A}(x)-\bar{s}_{\beta, q}(x)|\\&\leq& \sup_{x\in [0,1)\setminus F_{q,N,\delta}}|\tilde{s}_{\beta+2m\beta_0,q,\epsilon,\delta,A}(x)-\bar{s}_{\beta+2m\beta_0,q}(x)|+\sup_{x\in [0,1)\setminus F_{q,N,\delta}}|\tilde{s}_{2m\beta_0,q,\epsilon,\delta,A}(x)-\bar{s}_{2m\beta_0,q}(x)|\\&< &2m\frac{\epsilon}{4m}+2m\frac{\epsilon}{4m}=\epsilon.
\earyst
This is the property (2). Similarly,
\aryst
&&\sup_{z_1,z_2\in B_\rho}|\tilde{s}_{\beta,q,\epsilon,\delta,A}(z_1)-\tilde{s}_{\beta,q,\epsilon,\delta,A}(z_2)|\\&\leq&\sup_{z_1,z_2\in B_\rho}|\tilde{s}_{\beta+2m\beta_0,q,\epsilon,\delta,A}(z_1)-\tilde{s}_{\beta+2m\beta_0,q,\epsilon,\delta,A}(z_2)|+\sup_{z_1,z_2\in B_\rho}|\tilde{s}_{2m\beta_0,q,\epsilon,\delta,A}(z_1)-\tilde{s}_{2m\beta_0,q,\epsilon,\delta,A}(z_2)|\\&\leq &2m\left(C(N,q,\epsilon,\delta,A,\rho)+C(N,q,\epsilon,\delta,A,\rho)\right) |z_1-z_2|\\&=&4m C(N,q,\epsilon,\delta,A,\rho)|z_1-z_2|.
\earyst
This is the property (4). 
Obviously, $ \tilde{s}_{\beta,q,\epsilon,\delta,A}$ satisfies the other properties.
\end{proof}

\bigskip

\section{A crucial lemma}
To prove the main theorem, we need the following lemma which is an analytic version of Lemma 6 in \cite{WZ}. 

Let $\Gamma = (\Q \times \R) \cup (\R \times \Q) \subset \R^2$. Recall that $\pi:\R^2\rightarrow \T^2$ is the covering projection. For $x, y \in \T^2$, we say $x-y \notin \Gamma$ means that if $\tilde{x},\tilde{y}\in \R^2$ satisfying $\pi(\tilde{x})=x, \pi(\tilde{y})=y$, then $\tilde{x}-\tilde{y} \notin \Gamma$, similarly, we say $x \notin \Gamma$ if $\tilde{x} \notin \Gamma$.
\begin{lem} \label{lem:h n}
Given an integer $q \geq 2$,  for any $\sigma > 0$ and $x, y \in \T^2$ with  $x,y,x-y \notin \Gamma$, there exists $(h,x',y',N)$
  such that:
\enmt
\item[$(1)$] $N\in 2\N\cap \N_{\geq4}$;
\item[$(2)$] $h \in \Diff_{\infty}^{\,\omega}(\T^2, \lambda)$ commutes with both $T_{(\frac{1}{q}, 0)}$ and $T_{(0, \frac{1}{q})}$;
\item[$(3)$]  $x',y',x'-y' \notin \Gamma, d(x, h(x')),d(y, h(y')), d(x', y') < \sigma$, and
\aryst
d(h T_{(\frac{2}{Nq}, 0)}(x'), h T_{(\frac{2}{Nq}, 0)}(y'))< \sigma;
\earyst
\item[$(4)$]  $d_{C^{0}}(\tilde{h}, \Id_{\R^2}) \leq 2d(x,y) + \frac{4}{Nq}$, where $\tilde{h}$ is a lift of $h$ to $\R^2$.
\eenmt
\end{lem}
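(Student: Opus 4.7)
The plan is to construct $h$ as a composition $h = h_2 \circ h_1$ of two measure-preserving real-analytic shears whose lifts have the form $\tilde h_1(a,b) = (a, b + \tilde s(a))$ and $\tilde h_2(a,b) = (a + \tilde t(b), b)$, where $\tilde s, \tilde t$ are analytic $\frac{1}{q}$-periodic approximations (produced by Lemma \ref{R}) of suitable block-slide step functions $\bar s_{\beta,q}, \bar t_{\gamma,q}$. Each such shear automatically commutes with $T_{(1/q,0)}$ (by the $\frac{1}{q}$-periodicity of $\tilde s, \tilde t$) and with $T_{(0,1/q)}$ (by additivity in the complementary variable), is area-preserving (its Jacobian equals $1$), and lies in $\Diff_{\infty}^{\,\omega}(\T^{2},\lambda)$ by Lemma \ref{R}(1)--(3); so properties (1) and (2) will hold once $N \in 2\N \cap \N_{\geq 4}$ is fixed.

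For the setup, first pick lifts $\tilde x, \tilde y \in \R^{2}$ of $x, y$ realizing $|\tilde x - \tilde y| = d(x,y)$, and set $m := (\tilde x + \tilde y)/2$. Using the irrationality coming from $x, y, x-y \notin \Gamma$, select $N$ large enough (specifically $N > C_0/(\sigma q)$ for a universal $C_0$) so that the coordinates $\tilde x_2, \tilde y_2, m_2$ each lie in the interior of three distinct blocks of the partition $\{[j/(Nq), (j+1)/(Nq))\}_j$, well away from the boundary strip $F_{q,N,\delta}$ for a small $\delta$. Pick $\tilde x', \tilde y'$ in a neighborhood of $m$ of radius $O(1/(Nq))$ satisfying $x', y', x'-y' \notin \Gamma$ (a full-measure genericity condition), with $\tilde x'_1$ and $\tilde y'_1$ in \emph{two distinct adjacent} $\frac{1}{Nq}$-blocks and $\tilde x'_2, \tilde y'_2$ in a \emph{common} block equal to the block of $m_2$. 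Now define $\bar s$ to equal $\tilde x_2 - \tilde x'_2$ on the block of $\tilde x'_1$, $\tilde y_2 - \tilde y'_2$ on the block of $\tilde y'_1$, and $0$ elsewhere; define $\bar t$ analogously with value $\tilde x_1 - \tilde x'_1$ on the block of $\tilde x_2$, $\tilde y_1 - \tilde y'_1$ on the block of $\tilde y_2$, and $0$ elsewhere. All block values lie in $[-1,1)$, so Lemma \ref{R} (applied with $m=1$) yields analytic $\tilde s, \tilde t$ for sufficiently small $\epsilon, \delta$ and correspondingly large $A > A_0^{(1)}(\epsilon,\delta,N)$.

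The verification of (3) is then a direct chase: away from $F_{q,N,\delta}$ one has $|\tilde s - \bar s| < \epsilon$, so the first shear sends $\tilde x'$ to $(\tilde x'_1, \tilde x_2 + O(\epsilon))$; since $\tilde x_2$ is deep in the interior of its $\bar t$-block, the second shear maps this to $(\tilde x_1 + O(\epsilon), \tilde x_2 + O(\epsilon))$, giving $d(x, h(x')) < \sigma$ for $\epsilon$ small, and analogously for $h(y')$. The inequality $d(x',y') < \sigma$ holds by construction. For the last distance in (3), $T_{(2/Nq,0)}$ moves $\tilde x'_1, \tilde y'_1$ two blocks forward; with $N \geq 4$ and $\tilde y'_1$ adjacent to $\tilde x'_1$ these translated first coordinates land in blocks where $\bar s \equiv 0$, so the first shear acts essentially as the identity on them in the second coordinate; since $\tilde x'_2, \tilde y'_2$ share the $\bar t \equiv 0$ block of $m_2$, the second shear adds a common $O(\epsilon)$ horizontal shift, and $d(hT_{(2/Nq,0)}(x'), hT_{(2/Nq,0)}(y'))$ equals $d(x',y') + O(\epsilon) < \sigma$. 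Condition (4) follows from $|\tilde h - \Id| \leq \|\tilde t\|_\infty + \|\tilde s\|_\infty \leq d(x,y) + O(1/(Nq)) + 2\epsilon \leq 2d(x,y) + 4/(Nq)$.

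The main obstacle is the delicate parameter matching: we must tune $N, \delta, \epsilon$ so that the intermediate image $h_1(\tilde x')$ actually lands in the \emph{same} block of $\bar t$ that contains $\tilde x_2$ (and similarly for $\tilde y'$), rather than being deflected by the $O(\epsilon)$ approximation error into $F_{q,N,\delta}$, where $\tilde t - \bar t$ is uncontrolled. One cannot absorb such a deflection using the Lipschitz bound in Lemma \ref{lem:aa}(4), since the constant $C(N,q,\epsilon,\delta,A,\rho)$ is doubly-exponential in $A$; instead, one argues directly that the ``interior of the intended block'' property is preserved under the first shear as long as $\epsilon$ is chosen smaller than half the block width $1/(Nq)$ minus the boundary strip $\delta/(2Nq)$.
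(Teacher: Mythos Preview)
Your approach is correct and is essentially the paper's: both build $h$ as a composition of two measure-preserving analytic shears approximating block--slide step functions via Lemma~\ref{R}, verify the geometric conclusions on the step-function level, and then pass to the analytic approximation for $\epsilon,\delta$ small. The paper's variant is slightly more economical in one respect: rather than placing $x',y'$ near the midpoint and assigning \emph{two} nonzero block values to each step function, it keeps $\tilde y$ fixed (so $\bar h^{-1}(\tilde y)=\tilde y$) and only transports $\tilde x$, using a \emph{single} nonzero block in each of $\bar s_\alpha,\bar s_\beta$; the pair $(x',y')$ is then taken to be a generic perturbation of $(h^{-1}(x),h^{-1}(y))$. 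Your symmetric choice works just as well once $N$ is large enough that the blocks of $\tilde x_2$, $\tilde y_2$ are separated by at least one intermediate block (so that a zero-block for $\bar t$ is available near $m_2$).

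One small correction in your check of (4): the intermediate bound $\|\tilde t\|_\infty+\|\tilde s\|_\infty\le d(x,y)+O(1/(Nq))+2\epsilon$ is not right. By Remark~\ref{rem:bound} each sup is bounded by the sum of the absolute block values, giving
\[
\|\tilde s\|_\infty+\|\tilde t\|_\infty \ \le\ |\tilde x_1-\tilde y_1|+|\tilde x_2-\tilde y_2|+O\!\left(\tfrac{1}{Nq}\right)\ \le\ \sqrt{2}\,d(x,y)+O\!\left(\tfrac{1}{Nq}\right),
\]
with no $\epsilon$-term; since $\sqrt{2}<2$, this still yields $d_{C^0}(\tilde h,\Id_{\R^2})\le 2d(x,y)+4/(Nq)$ for $N$ chosen large enough relative to $d(x,y)$.
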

\begin{proof}
Because $x, y, x-y \notin \Gamma$, we may select $\tilde{x},\tilde{y}\in [0,1)\times [0,1)$ such that $\pi(\tilde{x})=x, \pi(\tilde{y})=y$, $\tilde{x}-\tilde{y} \notin \Gamma$.  $\forall n \in \N_{\geq1}$, we may assume that
 $$\tilde{x}=(x_1,x_2)\in (\frac{i_1(n)}{nq},\frac{i_1(n)+1}{nq})\times(\frac{i_2(n)}{nq},\frac{i_2(n)+1}{nq}),$$
 $$\tilde{y}=(y_1,y_2)\in (\frac{j_1(n)}{nq},\frac{j_1(n)+1}{nq})\times(\frac{j_2(n)}{nq},\frac{j_2(n)+1}{nq}),$$
where $0\leq i_1(n), i_2(n), j_1(n), j_2(n)\leq nq-1$. When $n$ satisfies:
\begin{equation}\label{N}
\frac{1}{qn}< \min\left\{\min\limits_{k\in \Z, i=1,2}\left\{\left|x_i-y_i+\frac{k}{q}\right|\right\},\frac{1-y_1}{4}\right\},
\end{equation}
we have $i_t \not \equiv j_t \, (\mathrm{mod}\, n), t=1,2$. Now we fix an even 
integer $N>max\{3,\frac{4}{q\sigma}\}$  which satisfies (\ref{N}). Define $\alpha=(\alpha_0,\cdots,\alpha_{N-1}), \beta=(\beta_0,\cdots,\beta_{N-1})$ by:
\begin{equation}
\beta_i=\left\{
\begin{array}{rcl}
&\frac{j_1+1.5}{Nq}-x_1, & {i\equiv i_2(N) \,(\mathrm{mod}\,  N)};\\
&0 ,   & {\text{others}}
\end{array} \right.
\end{equation}

\begin{equation}
\alpha_i=\left\{
\begin{array}{rcl}
&\frac{j_2+0.5}{Nq}-x_2 ,& {i\equiv j_1(N)+1\, (\mathrm{mod} \, N)};\\
&0 ,  & {\text{others}}
\end{array} \right.
\end{equation}
We recall (\ref{eq:s}) in the last section. Let $\bar{s}_\alpha:=\bar{s}_{\alpha,q}, \bar{s}_\beta:=\bar{s}_{\beta,q}$.
We define the following maps defined on $[0,1)\times[0,1)$ to $\R^2$:
$$\bar{h}_\alpha(a,b)=(a,b-\bar{s}_\alpha(a)),\quad \bar{h}_\beta(a,b)=(a-\bar{s}_\beta(b),b),\quad\bar{h}=\bar{h}_\beta \bar{h}_\alpha.$$
Note that $\bar{h}^{-1}(a,b)=(a+\bar{s}_\beta(b),b+\bar{s}_\alpha(a+\bar{s}_\beta(b)))$. We have $\bar{h}^{-1}(\tilde{x})=(\frac{j_1+1.5}{Nq},\frac{j_2+0.5}{Nq})$ and $\bar{h}^{-1}(\tilde{y})=\tilde{y}$. Hence we get that
 $$d(\bar{h}^{-1}(\tilde{x}), \bar{h}^{-1}(\tilde{y}))<\frac{4}{Nq}<\sigma.$$
As $\frac{j_1+4}{Nq}<y_1+\frac{4}{Nq}<1$, we have $T_{(\frac{2}{Nq}, 0)}(\bar{h}^{-1}(\tilde{x})),T_{(\frac{2}{Nq}, 0)}(\bar{h}^{-1}(\tilde{y}))\in [0,1)^2$. By definition of $\bar{h}$, we obtain 
that $\bar{h}$ fixes $T_{(\frac{2}{Nq}, 0)}(\bar{h}^{-1}(\tilde{x}))$ and $T_{(\frac{2}{Nq}, 0)}(\bar{h}^{-1}(\tilde{y}))$. Therefore,
 $$d(\bar{h} T_{(\frac{2}{Nq}, 0)}(\bar{h}^{-1}(\tilde{x})), \bar{h} T_{(\frac{2}{Nq}, 0)}(\bar{h}^{-1}(\tilde{y})))=d(T_{(\frac{2}{Nq}, 0)}(\bar{h}^{-1}(\tilde{x})),T_{(\frac{2}{Nq}, 0)}(\bar{h}^{-1}(\tilde{y})))<\frac{4}{Nq}< \sigma.$$ 

Note that $\alpha,\beta \in [-1,1)^N$. Applying Lemma \ref{R}  for $m=1$, for any $\epsilon\in (0,\frac{1}{8}),\delta\in (0,1)$ and $A>A^{(1)}_0(\epsilon,\delta,N)$, we get the real analytic $1/q$-periodic approximate functions $\tilde{s}_\alpha:=\tilde{s}_{\alpha,q,\epsilon,\delta,A}, \tilde{s}_\beta:=\tilde{s}_{\beta,q,\epsilon,\delta,A}$ of $\bar{s}_\alpha,\bar{s}_\beta$, respectively. Define $\tilde{h}_\alpha=(a,b-\tilde{s}_\alpha(a)),\tilde{h}_\beta=(a-\tilde{s}_\beta(b),b)$ and let $\tilde{h}=\tilde{h}_\beta \tilde{h}_\alpha$. More precisely, $$\tilde{h}(a,b)=(a-\tilde{s}_\beta(b-\tilde{s}_\alpha(a)),b-\tilde{s}_\alpha(a)) \text{ for all } (a,b)\in\R^2.$$ Obviously, it is a diffeomorphism of $\R^2$. By Lemma \ref{R}, it can be extended holomorphically to $\C^2$. Moreover, it satisfies that $\tilde{h}((a,b)+(\frac{k_1}{q},\frac{k_2}{q}))=\tilde{h}(a,b)+(\frac{k_1}{q},\frac{k_2}{q})$ for any $(a,b)\in\C^2$ and $(k_1,k_2)\in\Z^2$. In particular, it induces a diffeomorphism on $\T^2$, denoted it by $h$. 
 
We note that $\tilde{x},\tilde{y},\bar{h}^{-1}(\tilde{x}),\bar{h}^{-1}(\tilde{y})\notin \Gamma_{Nq},$ where $\Gamma_{Nq}:=\{(a,b)\mid a=\frac{i}{Nq} ~ or ~ b=\frac{i}{Nq},i\in \Z \} $. We choose $\delta>0$ small enough such that $\tilde{x},\tilde{y},\bar{h}^{-1}(\tilde{x}),\bar{h}^{-1}(\tilde{y})$ not belong to the set $F_{q,N,\delta}$ which is defined in Lemma \ref{lem:aa}. By continuity and Lemma \ref{R}, letting $0<\epsilon<\min\left\{\left|\frac{j_2+0.5}{Nq}-x_2 \right|,\left|\frac{j_1+1.5}{Nq}-x_1\right|\right\}$ small enough, we have
$$d(\tilde{h}^{-1}(\tilde{x}), \tilde{h}^{-1}(\tilde{y}))<\sigma,\quad d(\tilde{h} T_{(\frac{2}{Nq}, 0)}(\tilde{h}^{-1}(\tilde{x})),\tilde{h} T_{(\frac{2}{Nq}, 0)}(\tilde{h}^{-1}(\tilde{y})))<\sigma.$$
Obviously, there exist $\tilde{x}',\tilde{y}'$ closed to $\tilde{h}^{-1}(\tilde{x}),\tilde{h}^{-1}(\tilde{y})$ and satisfy
$$d(\tilde{x}', \tilde{y}')<\sigma,\quad d(\tilde{h} T_{(\frac{2}{Nq}, 0)}(\tilde{x}'),\tilde{h} T_{(\frac{2}{Nq}, 0)}(\tilde{y}'))<\sigma.$$
 Assuming $x'=\pi(\tilde{x}'), y'=\pi(\tilde{y}')$, then $(h,x',y',N)$
 satisfies the items $(1)$  and $(3)$.


Moreover,  as $\tilde{h}_\alpha=(a,b-\tilde{s}_\alpha(a)),\tilde{h}_\beta=(a-\tilde{s}_\beta(b),b)$ preserve the Lebesgue measure on $\R^2$,  $\tilde{h}$ preserves the Lebesgue measure. Therefore,  $h \in \Diff_{\infty}^{\,\omega}(\T^2, \lambda)$ and it commutes with $T_{(\frac{1}{q}, 0)}$ and $T_{(0, \frac{1}{q})}$. Furthermore, we have:
$$ d_{C^{0}}(\tilde{h}, \Id_{\R^2}) \leq \sup(|\tilde{s}_\alpha|+|\tilde{s}_\beta|) \leq \left|\frac{j_2+0.5}{Nq}-x_2 \right|+\left|\frac{j_1+1.5}{Nq}-x_1\right|<2d(x,y)+\frac{4}{Nq},$$
where the second inequality comes from Remark \ref{rem:bound}. Then $(h,x',y',N)$  also satisfies $(2)$ and $(4)$, and hence it is the desired.
\end{proof}
\bigskip
 
\section{Proof of the main Theorem}

To prove Theorem \ref{thm example}, it is enough to prove the following proposition. 

\begin{prop}\label{prop example}
Fix $\rho>0$, there exists an area-preserving and minimal pseudo-rotation $f \in \Diff_\rho^{\,\omega}(\T^2,\lambda)$ which has bounded mean motion,  and satisfies the following: for any $\varepsilon > 0$, there exist two points $x, y \in \T^2$ with $d(x,y) < \varepsilon$, and an integer $N > 0$ such that $d(f^{N}(x), f^{N}(y)) \geq \frac{1}{1000}$.
\end{prop}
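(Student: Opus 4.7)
The plan is an analytic Anosov--Katok approximation-by-conjugation, using Lemma~\ref{lem:h n} at each step to inject the non-rigidity. Inductively choose rational vectors $\alpha_n=p_n/q_n\in\Q^2$ with $q_n\mid q_{n+1}$, and measure-preserving analytic maps $h_{n+1}\in\Diff^\omega_\rho(\T^2,\lambda)$ commuting with $T_{(1/q_n,0)}$ and $T_{(0,1/q_n)}$; set $H_n=h_1\circ\cdots\circ h_n$ and $f_n=H_n T_{\alpha_n}H_n^{-1}$. Because $h_{n+1}$ commutes with $T_{\alpha_n}$,
\[
f_{n+1}=f_n\cdot\bigl(H_n h_{n+1}\,T_{\alpha_{n+1}-\alpha_n}\,h_{n+1}^{-1}H_n^{-1}\bigr),
\]
so once $h_{n+1}$ is fixed the freedom in $\alpha_{n+1}$ alone lets one force $d_\rho(f_{n+1},f_n)$ to be arbitrarily small. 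The sequence $(f_n)$ then converges in $\Diff^\omega_\rho(\T^2,\lambda)$ to a measure-preserving analytic map $f$ whose rotation vector is the Liouville limit $\alpha=\lim\alpha_n$.

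To produce $h_{n+1}$, fix once and for all two base points $P,Q\in\T^2$ with $d(P,Q)\ge 1/100$ and, at stage~$n$, put $x_n:=H_n^{-1}(P),\ y_n:=H_n^{-1}(Q)$ (perturbed if needed so that $x_n,y_n,x_n-y_n\notin\Gamma$), together with $\sigma_n\downarrow 0$ chosen tiny compared to the Lipschitz constants of $H_n$ on $B_\rho$. Apply Lemma~\ref{lem:h n} with these data and $q=q_n$ to obtain $(h_{n+1},x_n',y_n',N_n)$. Then pick $q_{n+1}$ a large multiple of $N_n q_n^2$ and $\alpha_{n+1}=\alpha_n+(2/q_{n+1},\eta_n)$ with a tiny rational $\eta_n$ ensuring that $\alpha=\lim\alpha_n$ stays totally irrational; with $k_n:=q_{n+1}/(N_n q_n)$, the integer $k_n$ is a multiple of $q_n$ and $T_{\alpha_{n+1}}^{k_n}\equiv T_{(2/(N_n q_n),0)}$ on $\T^2$. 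Interleave with passive substeps where $h_{n+1}=\Id$ but $\alpha_{n+1}$ is adjusted so that the $f_n$-orbit of a fixed base point visits the $n$th element of a prescribed dense sequence $\{w_n\}\subset\T^2$; this is possible because $f_n$ is conjugate to a rational translation.

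Now verify the three required properties. \emph{Minimality} follows from the passive substeps and density of $\{w_n\}$, provided $f$-orbits shadow $f_n$-orbits for long times, which is arranged by taking $|\alpha_{m+1}-\alpha_m|$ to decay sufficiently fast. \emph{Bounded mean motion}: since $f_n^{q_n}=\Id$, $\|\tilde f_n^k-\Id-k\tilde\alpha_n\|_{C^0}\le 2\|\tilde H_n-\Id\|_{C^0}$ uniformly in $k$; item~(4) of Lemma~\ref{lem:h n} gives $\|\tilde h_{n+1}-\Id\|_{C^0}\le 2d(x_n,y_n)+4/(N_n q_n)$, which (with $N_n$ large) keeps $\sup_n\|\tilde H_n-\Id\|_{C^0}$ finite, and the additional rate $\sum_n q_{n+1}|\alpha_{n+1}-\alpha_n|<\infty$ propagates the displacement bound to $f$; Theorem~\ref{thm:J} then yields the semi-conjugacy to $T_\alpha$ homotopic to the identity. \emph{Non-conjugacy}: put $\xi_n=H_{n+1}\bigl(T_{(2/(N_n q_n),0)}(x_n')\bigr)$ and $\eta_n=H_{n+1}\bigl(T_{(2/(N_n q_n),0)}(y_n')\bigr)$. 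Then $d(\xi_n,\eta_n)\le\mathrm{Lip}(H_n)\cdot\sigma_n\to 0$, while with $N_n^\circ:=q_{n+1}-k_n$ one computes directly using $f_{n+1}^{q_{n+1}}=\Id$ that $f_{n+1}^{N_n^\circ}(\xi_n)=H_{n+1}(x_n')\approx H_n(x_n)=P$ and likewise $f_{n+1}^{N_n^\circ}(\eta_n)\approx Q$, so $d(f_{n+1}^{N_n^\circ}(\xi_n),f_{n+1}^{N_n^\circ}(\eta_n))\ge d(P,Q)-o(1)\ge 1/100-o(1)$. Choosing the later perturbations tiny enough that $\sup_{0\le j\le q_{n+1}}\|\tilde f^j-\tilde f_{n+1}^j\|<1/1000$ transfers this separation to $f^{N_n^\circ}$, and the proposition follows by sending $\sigma_n\to 0$.

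The principal obstacle is the doubly exponential blow-up of the Lipschitz constant $C^{(1)}(N_n,q_n,\epsilon_n,\delta_n,A_n,\rho)$ furnished by Lemma~\ref{R}: it forces $|\alpha_{n+1}-\alpha_n|=2/q_{n+1}$ to be astronomically small, while $q_{n+1}$ must simultaneously be a multiple of $N_n q_n^2$ (so that the twisting time $k_n$ is an integer and a multiple of $q_n$), be small enough that $\sum q_{n+1}|\alpha_{n+1}-\alpha_n|$ converges (for bounded mean motion), and be large enough that orbit shadowing over times $q_{n+1}$ remains valid (for minimality and for propagating the spreading from $f_{n+1}$ to $f$). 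Since all three demands only ask that $q_{n+1}$ be chosen large and of a prescribed divisibility at stage~$n$, a diagonal choice at each step resolves the tension; the remainder of the argument reduces to standard analytic bookkeeping in $\Diff^\omega_\rho(\T^2,\lambda)$.
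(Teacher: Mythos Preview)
Your outline is essentially the paper's: analytic Anosov--Katok with Lemma~\ref{lem:h n} supplying the conjugacies, item~(4) of that lemma keeping $\sup_n\|\tilde H_n-\Id\|_{C^0}$ finite for bounded mean motion, and item~(3) producing the orbit-spreading for non-conjugacy. The non-conjugacy mechanism you describe (follow $\xi_n,\zeta_n$ through an iterate that undoes the shift $T_{(2/(N_nq_n),0)}$) is the paper's $G_n^\gamma$ device in disguise. There is, however, a real gap in the minimality step, and some parameter inconsistencies that stem from it.

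Your passive-substep argument, as written, arranges only that the orbit of \emph{one} fixed base point meets each $w_n$; even with perfect shadowing this yields a single dense $f$-orbit, which does not give minimality, and J\"ager's Theorem~\ref{thm:J} needs minimality as a hypothesis. What must actually be arranged is that \emph{every} $f_{n+1}$-orbit is $2^{-n}$-dense, equivalently that the cyclic group $\{m\omega_{n+1}\bmod\Z^2:m\in q_n\Z\}$ is $\kappa$-dense for some $\kappa<\|DH_{n+1}\|_{C^0}^{-1}2^{-n-1}$. The paper does this by choosing, \emph{after} $H_{n+1}$ is fixed, $\omega_{n+1}-\omega_n=\frac{1}{q_nr_{n+1}}(1,v)$ with $v>100\kappa^{-1}$ and $r_{n+1}\ge100\kappa^{-1}v$; the large ratio $v$ between the coordinates is exactly what spreads the cyclic group over $\T^2$. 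Your increment $(2/q_{n+1},\eta_n)$ with $\eta_n$ tiny has coordinate ratio $O(1)$ and generates a subgroup lying near a line. The same order-of-choices problem bites your non-conjugacy computation: with $\eta_n=m_n/q_{n+1}$ one has $T_{\alpha_{n+1}}^{k_n}=T_{(2/(N_nq_n),\,m_n/(N_nq_n))}$ rather than $T_{(2/(N_nq_n),0)}$, and the residual $m_n/(N_nq_n)$, once pushed through $H_{n+1}$, is not small unless it is dominated by a margin $\kappa$ fixed after $H_{n+1}$---which your sequence of choices does not permit. Finally, the condition $\sum_n q_{n+1}|\alpha_{n+1}-\alpha_n|<\infty$ you invoke for bounded mean motion is both unnecessary (since $\|\tilde f_n^k-\Id-k\alpha_n\|\le 2\|\tilde H_n-\Id\|$ already holds for all $k$, and one passes to the limit by a finite-window $\epsilon_n$ mechanism) and incompatible with your own choice $|\alpha_{n+1}-\alpha_n|\ge 2/q_{n+1}$.
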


\begin{proof}
In the proof, we use the same approximation by conjugation scheme in \cite{WZ}. The different is that we have to replace the $C^\infty$ conjugacies in the proof of \cite[Proposition 6]{WZ} by the $C^\omega$ conjugacies in our situation.

We will construct a sequence of $h_n \in\Diff_\infty^{\,\omega}(\T^2, \lambda)$ , $\omega_n=(\omega_{n,1},\omega_{n,2}=q_n^{-1}\hat{\omega}_n) \in \Q^2$ with $\hat{\omega}_n\in \Z^2$, $q_n\in\N$ for each $n\geq1$.   We first introduce $(a1)_n-(a4)_n$ for a given $n \geq 1$:
\enmt
\item[$(a1)_{n}$] There exists $\tilde{h}_n$ a lift of $h_n$ to $\R^2$, such that $d_{C^0}(\tilde{h}_n, \Id_{\R^2}) < 2^{-n}$;
Let $H_n := h_{1} \cdots h_n$, then $H_n \in\Diff_\infty^{\,\omega}(\T^2, \lambda )$, and the map $\tilde{H}_n := \tilde{h}_1 \cdots \tilde{h}_n$ is a lift of $H_n$ and satisfies: $$d_{C^0}(\tilde{H}_n, \Id_{\R^2}) \leq \sum_{i=1}^{n}d_{C^0}(\tilde{h}_{i}, \Id_{\R^2}) < 1 - 2^{-n};$$
\item[$(a2)_{n}$] There exist $x_n, y_n \in \T^2$ with $x_n,y_n,x_n-y_n \notin \Gamma$ such that:  
                     $$d(x_n, y_n)  <  10^{-2n},\, d(H_n(x_n), H_n(y_n)) > \frac{1}{1000};$$
\item[$(a3)_{n}$] For $f_n := H_n T_{\omega_n} H_n^{-1}  \in\Diff_\infty^{\,\omega}(\T^2, \lambda )$, there exist $x^{(n)}, y^{(n)} \in \T^2$, $m_n \in \N$ such that: $$d(x^{(n)}, y^{(n)}) < 10^{-n},\, d(f^{m_n}_n(x^{(n)}), f^{m_n}_n(y^{(n)})) > \frac{1}{1000};$$
\item[$(a4)_{n}$] For any $z \in \T^2$, the set $\{f_n^{k}(z)\}_{k \in \Z}$ is $2^{-n}$-dense in $\T^2$.
\eenmt
Here we say that a set $K \subset \T^2$ is $\sigma$-dense for some $\sigma > 0$, if for any $x \in \T^2$ there exists $y \in K$ such that $d(x,y) < \sigma$.

Note that $(a1)_{n}$ and $(a3)_{n}$ imply the following:  the map $F_n := \tilde{H}_nT_{\omega_n} \tilde{H}_n^{-1}$ is a lift of $f_n$, and for  any integer $k \geq 1$ we have
\ary\label{eq:F}
\sup_{z \in \R^2}\norm{F_n^{k}(z) - z - k \omega_n} &=& \sup_{z \in \R^2}\norm{ \tilde{H}_n (\tilde{H}_n^{-1}(z) + k\omega_n) -\tilde{H}_n(\tilde{H}_n^{-1}( z)) - k \omega_n} \nonumber \\
&=&\sup_{z \in \R^2}\norm{ \tilde{H}_n (z + k\omega_n) -\tilde{H}_n(z) - k \omega_n}  \nonumber \\
&\leq& 2d_{C^0}(\tilde{H}_n, \Id_{\R^2})  < 10. \label{PartI3term 111}
\eary
Moreover, whenever $(a1)_n-(a4)_n$ are satisfied,  there exists a sufficiently small real number $\epsilon_n > 0$  such that 
for any $f \in \mathrm{Homeo}(\T^2)$ satisfying $d_{C^0}(f, f_n) < \epsilon_n$, for any $\omega\in \R^2$ satisfying $\norm{\omega - \omega_n} < \epsilon_n$, and for any $F \in \mathrm{Homeo}(\R^2)$ satisfying $d_{C^0}(F, F_{n}) < \epsilon_n$, we have
\ary 
&&\label{PartI3term 1} \norm{F^{k}(z) - z - k \omega} < 10, \quad \forall z \in \R^2, 1 \leq k \leq n, \\
&&  \label{PartI3term 2} d(f^{m_n}(x^{(n)}), f^{m_n}(y^{(n)})) > \frac{1}{1000}, \\
&& \label{PartI3term 4} \{f^{k}(z)\}_{k \in \N} \mbox{ is $2^{-n+1}$-dense in $\T^2$ for any } z \in \T^2.
\eary
Without loss of generality, we can assume that $\epsilon_k > \epsilon_{k+1}$ for any  $k \geq 1$. 

Now we can introduce the last induction hypothesis for a given $n\geq1$:
\item[$(a5)_n$]\, we have $d_\rho(f_{n+1},f_{n}),  d_{C^0}(F_{n+1},F_n), \norm{\omega_{n+1} - \omega_{n}} < 2^{-n}\epsilon_{n}$.

For each integer $n\geq1$, we will construct $h_i,\omega_i,q_i,\hat{\omega}_i$ for $1\leq i\leq n$, satisfying $(a1)_i-(a4)_i$
for any $1\leq i\leq n$, and $(a5)_i$ for any $1\leq i\leq n-1$.

To start the induction, we let  $h_1 = \Id_{\T^2}$ and $\omega_{1} = q_1^{-1}\hat{\omega}_1 = (\frac{1}{100}, \frac{1}{10})$, where $q_1 = 100$ and $\hat{\omega}_1 = (1,10)$. It is direct to verify $(a1)_1 - (a4)_1$.

\textbf{Suppose that we have constructed $h_i,\omega_i,q_i,\hat{\omega}_i$ for $1\leq i\leq n$, satisfying $(a1)_i-(a4)_i$
for any $1\leq i\leq n$, and $(a5)_i$ for any $1\leq i\leq n-1$. Let $f_n, F_n, H_n$, $\omega_n$, $x_n$, $y_n$, $x^{(n)}$, $y^{(n)}$, $m_n$, $\epsilon_n$ be given by induction hypothesis. We will construct $(h_{n+1}, \omega_{n+1}, q_{n+1}, \hat{\omega}_{n+1})$ as follows.}

We recall that $\omega_n = (\omega_{n,1}, \omega_{n,2}) = q_n^{-1}\hat{\omega}_n$ with $\hat{\omega}_n  \in \Z^2$, $q_n \in \N$.  Without loss of generality, we can assume that $q_n > 10^{n}$. 

Recall $(a2)_n$, we set $\sigma_n$ small enough such that:
\enmt
\item [(1)] if $d(x, x_n),d(y, y_n)< \sigma_n$, then  $d(H_{n}(x), H_{n}(y))>\frac{1}{1000}$;
\item [(2)] $\sigma_n< 10^{-2n-2}\cdot \min\{\norm{DH_n}_{C^0}^{-1}, 1\}$, where $DH_n$ is the real derivative of $H_n$.
\eenmt

Applying Lemma \ref{lem:h n} to $q = q_n$, $x = x_n$, $y = y_n$ , $\sigma=\sigma_{n}$, we get $(h_{n+1}, x_{n+1}, y_{n+1},N_{n+1}) := (h, x', y',N)$.  Then 
\enmt
\item [(i)]$h_{n+1} \in \Diff_\infty^{\,\omega}(\T^2, \lambda)$ commutes with $T_{(\frac{1}{q_n}, 0)}$ and $T_{(0, \frac{1}{q_n})}$, so it also commutes with $T_{\omega_{n}}$,  and the lift of $h_{n+1}$, $\tilde{h}_{n+1}$ satisfies:
$$d_{C^0}(\tilde{h}_{n+1}, \Id_{\R^2}) \leq 2d(x_n, y_n) + \frac{4}{N_{n+1} q_n} < 2^{-n-1};$$

\item [(ii)] $d(x_{n+1}, y_{n+1}) < \sigma_n <10^{-2n-2};$

\item [(iii)]  $x_{n+1},y_{n+1},x_{n+1}-y_{n+1}\notin \Gamma,d(x_{n},  h_{n+1}(x_{n+1})),d(y_n,  h_{n+1}(y_{n+1}))< \sigma_n$, so by (1) above, we have
\begin{equation}\label{eq:>}
d(H_{n+1}(x_{n+1}), H_{n+1}(y_{n+1}))=d(H_{n}h_{n+1}(x_{n+1}), H_{n}h_{n+1}(y_{n+1}))>\frac{1}{1000}.\end{equation}
\eenmt
\textbf{This verifies $(a1)_{n+1},(a2)_{n+1}$.}

Let 
\aryst
z^{(n+1)}& :=& H_{n+1} T_{(\frac{2}{N_{n+1} q_n}, 0)}(z_{n+1}) \quad \mbox{for } z = x,y. \earyst
By Lemma \ref{lem:h n} (3) and the  choice of $\sigma_n$, we see that 
\aryst
d(x^{(n+1)}, y^{(n+1)}) \leq \norm{DH_n}_{C^0}d(h_{n+1}T_{(\frac{2}{N_{n+1}q_n}, 0)}(x_{n+1}), h_{n+1}T_{(\frac{2}{N_{n+1}q_n}, 0)}(y_{n+1})) < 10^{-n-1}.
\earyst
\textbf{This verify the first inequality in $(a3)_{n+1}$.} 

For any $\gamma \in \R^2$ , we set
\begin{equation}\label{eq:G}
G^{\gamma}_n := H_{n+1}  T_{(-\frac{2}{N_{n+1}q_n}, 0)+ \gamma} H_{n+1}^{-1}.
\end{equation}
By definition and (\ref{eq:>}), we have
\ary 
d(G^{(0,0)}_n(x^{(n+1)}), G^{(0,0)}_n(y^{(n+1)})) > \frac{1}{1000}.
\eary
Then by continuity, there exists $\kappa> 0$ such that for any $\gamma \in \R^2$ with $\norm{\gamma} < \kappa$, we have 
\ary \label{PartI3d G G}
d(G^{\gamma}_n(x^{(n+1)}), G^{\gamma}_n(y^{(n+1)})) > \frac{1}{1000}.
\eary
Without loss of generality, we can also assume that
\ary \label{PartI3kappa 2 n 1}
\kappa< 2^{-n-1} \norm{DH_{n+1}}^{-1}_{C^0}. \eary

Set $\omega_{n+1} = \omega_n + \eta_{n+1}$ for some $\eta_{n+1} \in \Q^{2} \setminus \{(0,0)\}$ of the form
\ary \label{eq:beta}
\eta_{n+1}= \frac{1}{q_{n}r_{n+1}}(1,v),
\eary
where $v, r_{n+1} \in \N$ satisfy that 
\ary \label{eq:v}
v > 100\kappa^{-1}, \, r_{n+1}\geq 100\kappa^{-1}v.
\eary

We write $\eta_{n+1}=(a_{n+1},b_{n+1})$ and select $\rho_n$ large enough such that $\tilde{H}^{-1}_n(B_p)\subset B_{\rho_n}$. 
By applying Lemma \ref{lem:h n} to $q = q_n$, $x = x_n$, $y = y_n$ and $\sigma=\sigma_{n}$, we recall that  $(\alpha, \beta, \epsilon,\delta,A)$ which appears in the proof Lemma \ref{lem:h n}. We can write down the explicit form of $h_{n+1}$ and $h_{n+1}^{-1}$ as:
 \aryst
\tilde{ h}_{n+1} T_{\pm\eta_{n+1}} \tilde{h}_{n+1}^{-1}(a,b)&=& ( a\pm a_{n+1}+\tilde{s}_\beta(b)-\tilde{s}_\beta(b\pm b_{n+1}+\tilde{s}_\alpha(a+\tilde{s}_\beta(b))-\tilde{s}_\alpha(a+\tilde{s}_\beta(b)\pm a_{n+1})),\\
 &&b\pm b_{n+1}+\tilde{s}_\alpha(a+\tilde{s}_\beta(b))-\tilde{s}_\alpha(a+\tilde{s}_\beta(b)\pm a_{n+1})).
\earyst

By  Lemma \ref{R}, there exists a constant $C_{n+1}:=C^{(1)}(N_{n+1}, q_n,\epsilon,\delta, A,\rho_n)$ such that:
 \begin{equation}\label{eq:C}
 \norm{\tilde{h}_{n+1}T_{\pm\eta_{n+1}} \tilde{h}_{n+1}^{-1}-\mathrm{Id}_{\mathbb{C}^2}}_{\rho_n}<|a_{n+1}|+C_{n+1}(C_{n+1}|a_{n+1}|+|b_{n+1}|)+|b_{n+1}|+C_{n+1} |a_{n+1}|.
 \end{equation}
We write $\rho_{n}':=|a_{n+1}|+C_{n+1}(C_{n+1} |a_{n+1}|+|b_{n+1}|)+|b_{n+1}|+C_{n+1} |a_{n+1}|$. To verify $(a5)_{n+1}$, we prove the following Lemma.
\begin{lem}\label{lem:drho} There is a positive number $Q$ large enough such that, when $r_{n+1}> Q$, we have
\enmt
\item[$(1)$] $\|\eta_{n+1}\|<\rho_{n}' < \left(\sup\limits_{z\in B_{\rho_n+1}}\norm{D\tilde{H}_{n}(z)}+1\right)^{-1}\cdot2^{-n}\epsilon_n$, where $\epsilon_n$ is determined by $(a1)_n-(a4)_n$ (see the paragraph below (\ref{eq:F})); 

\item[$(2)$] the map $f_{n+1}$ given by
 \aryst
f_{n+1} &:=& H_{n+1} T_{\omega_{n+1}} H_{n+1}^{-1} \\
&=& H_{n}(h_{n+1} T_{\omega_{n}} T_{\eta_{n+1}} h_{n+1}^{-1}) H_{n}^{-1} \\
&=& H_{n}(T_{\omega_n} h_{n+1} T_{\eta_{n+1}} h_{n+1}^{-1}) H_{n}^{-1}
\earyst
is $2^{-n}\epsilon_{n}$-close to $f_n$ in $\Diff_\rho^{\,\omega}(\T^2)$;

\item[$(3)$]   the map $F_{n+1} = \tilde{H}_{n+1}T_{\omega_{n+1}} \tilde{H}_{n+1}^{-1}$ is $2^{-n}\epsilon_{n}$-close to $F_n$ in $C^0(\R^2)$.
\eenmt
\end{lem}
\begin{proof}
By definitions, the item (1) is clear. We now show that (2) and (3) hold when (1) is satisfied.

Note that $\rho_n'<1$ when (1) is satisfied. By definition of $\tilde{d}_\rho$, we have
 \aryst
\tilde{d}_\rho(f_{n+1},f_n)&=&\norm{ \tilde{H}_{n}(T_{\omega_n} h_{n+1} T_{\eta_{n+1}} h_{n+1}^{-1}) \tilde{H}_{n}^{-1}-\tilde{H}_{n}T_{\omega_n} \tilde{H}_{n}^{-1}}_{\rho}\\
&\leq&\norm{ \tilde{H}_{n}T_{\omega_n} (h_{n+1} T_{\eta_{n+1}} h_{n+1}^{-1}) -\tilde{H}_{n}T_{\omega_n} }_{\rho_{n}}\\
&\leq&\left(\sup\limits_{z\in B_{\rho_n+\rho_n'}}\norm{D\tilde{H}_{n}(z)}\right)\cdot \norm{\tilde{h}_{n+1}T_{\eta_{n+1}} \tilde{h}_{n+1}^{-1}-\mathrm{Id}_{\mathbb{C}^2}}_{\rho_n}
\\
&\leq&2^{-n}\epsilon_n.
\earyst

Similarly, we have
  \aryst
\tilde{d}_\rho(f^{-1}_{n+1},f^{-1}_n)&=&\norm{ \tilde{H}_{n}(T_{-\omega_n} h_{n+1} T_{-\eta_{n+1}} h_{n+1}^{-1}) \tilde{H}_{n}^{-1}-\tilde{H}_{n}T_{-\omega_n} \tilde{H}_{n}^{-1}}_{\rho}\\
&\leq&\left(\sup\limits_{z\in B_{\rho_n+\rho_n'}}\norm{D\tilde{H}_{n}(z)}\right)\cdot \norm{\tilde{h}_{n+1}T_{-\eta_{n+1}} \tilde{h}_{n+1}^{-1}-\mathrm{Id}_{\mathbb{C}^2}}_{\rho_n}
\\
&\leq&2^{-n}\epsilon_n.
\earyst

Therefore,  by definition of $d_\rho$ and $\rho_n'$, the item (2) follows from (\ref{eq:C}) and the item (1). Finally, (3) is obvious from the proof of (2). \end{proof}
By Lemma \ref{lem:drho},  \textbf{we verify $(a5)_{n+1}$} by taking $r_{n+1}>Q$  and setting $q_{n+1} = q_n r_{n+1}$.

Note that for any $m = kq_{n}$ with $k \in \Z$, we have $m \omega_{n+1} = k\hat{\omega}_{n} + \frac{(k, kv)}{r_{n+1}}$, and hence
\aryst
f_{n+1}^{m} = H_{n+1} T_{ \frac{(k, kv)}{r_{n+1}}} H_{n+1}^{-1}.
\earyst
By  \eqref{PartI3kappa 2 n 1}, (\ref{eq:beta}) and (\ref{eq:v}), it is direct to see that:

(1) for any $\kappa$-dense subset of $\T^2$, denoted by $K$, the set $H_{n+1}(K)$ is $2^{-n-1}$-dense in $\T^2$;

(2) for any $z \in \T^2$,  $\{ (z + m\omega_{n+1}) \mod \Z^2 \}_{m \in q_n\N }$ is $\kappa$-dense in $\T^2$.

Thus for any $z \in \T^2$, the set $\{ f_{n+1}^{m}(z)  \}_{m \in \N} = \{H_{n+1}(H_{n+1}^{-1}(z) + m \omega_{n+1})   \}_{m \in \N}$ is $2^{-n-1}$-dense in $\T^2$. \textbf{This verifies $(a4)_{n+1}$}. Moreover, by (\ref{eq:G}) and the item (2) above, there exists some $m \in q_n\N$ such that $f_{n+1}^{m} = G_{n}^{\gamma}$
for certain $\gamma\in\R^2$ with $\norm{\gamma} < \kappa$. 
Then by \eqref{PartI3d G G}, \textbf{we verify the second inequality in  $(a3)_{n+1}$}.

The above discussions show that, by choosing $r_{n+1}$ sufficiently large, we can ensure that  $(h_{n+1}$, $\omega_{n+1}, q_{n+1}, \hat{\omega}_{n+1})$  satisfies $(a1)_{n+1}-(a4)_{n+1}$ and $(a5)_n$, and thus complete the induction. For each $1\leq i\leq5$, let us denote by $(ai)$ the collection of induction hypotheses $(ai)_1,(ai)_2,\ldots$

Let the sequence $\{ f_n \}_{n \geq 1}$ be constructed by the above induction scheme.
By $(a5)$, $\{ f_n \}_{n \geq 1}$ converges to some map $f \in \Diff_\rho^{\,\omega}(\T^2, \lambda)$ under the $d_\rho$-metric; $\{ \omega_n \}_{n \geq 1}$ converges to some $\omega \in \R^2$; and $\{F_n\}_{n\geq1}$ converges to some $F\in\mathrm{Homeo}(\R^2)$, which is clearly a lift of $f$. Moreover for any integer $n \geq 1$, we have $ d_\rho(f_{n}, f), d_{C^0}(F_{n}, F), \norm{\omega - \omega_n} < \epsilon_n$. Consequently, (\ref{PartI3term 1}) to (\ref{PartI3term 4}) holds for $(f,\omega,F)$ and every $n\geq1$. By \eqref{PartI3term 2} and (a3), for any $\epsilon > 0$,  there exist $x,y \in \T^2$ satisfying $d(x,y) < \epsilon$, and an integer $m > 0$, such that $d(f^{m}(x), f^{m}(y)) \geq \frac{1}{1000}$.    By \eqref{PartI3term 4} and \eqref{PartI3term 1}, $f$ is  minimal and of bounded mean motion, thus $f$ must be a totally irrational pseudo-rotation. This concludes the proof. 
\end{proof}
By Theorem \ref{thm:J},  the map $f$ we constructed above is a real-analytic,  homotopic to the identity and semi-conjugate to a minimal translation $T_\a$. It is not conjugate to any minimal translation because for any $\varepsilon > 0$, there exist two points $x, y \in \T^2$ with $d(x,y) < \varepsilon$, and an integer $N > 0$ such that $d(f^{N}(x), f^{N}(y)) \geq \frac{1}{1000}$. Finally, Theorem \ref{thm example} follows from a similar argument in the proof of \cite[Theorem 4]{WZ}. We omit it.

\begin{rema} It is not difficult to check that the rotation vector of the constructed map $f$ above satisfies the  super-Liouvillian condition (see \cite[Theorem 2]{WZ} for the definition) by our choice of $q_n$ (see Remark \ref{rem:e} and Lemma \ref{lem:drho}). Based on  \cite[Theorem 2]{WZ}, the map $f$  is  $C^\infty$-rigid. 
\end{rema}

\bigskip


\begin{thebibliography}{99}
\bibitem[AK70]{AK}{\sc D.V. Anosov and A.B. Katok:} \newblock{New examples in smooth ergodic theory. Ergodic diffeomorphisms},
\newblock {\em Trans. of Moscow Math. Soc.}, {\bf 23}, 1-35 (1970)
\bibitem[Ban17]{B}
{\sc S. Banerjee:} \newblock{Non-standard real-analytic realization of some rotations of the circle}, \newblock {\em Ergod. Th. and Dynam. Sys.}, {\bf 37} (5), 1369-1386 (2017)


\bibitem[BK18]{BSK}
{\sc S. BANERJEE and P. KUNDE:} \newblock{Real-analytic AbC constructions on the torus}, \newblock{\em Ergod. Th. and Dynam. Sys.}, {\bf 132}, 1-46 (2018)


\bibitem[Boh16]{Bo}{\sc P. Bohl:} \newblock{\"Uber die hinsichtlich der unabh\"angigen variabeln periodische Differential-gleichung erster Ordnung},
\newblock {\em Acta Math.}, {\bf 40}, 321-336  (1916)

\bibitem[Den32]{D}{\sc A. Denjoy:} \newblock{Sur les courbes definies par les equations diff\'erentielles a la surface du tore},
\newblock {\em J. Math. Pures Appl.}, {\bf 11}, 333-375 (1932)

\bibitem[Eps66]{Eps}
{\sc D. B. A. Epstein:}  \newblock{Curves on 2-manifolds and isotopies},  \newblock{\em Acta. Math.} {\bf115},  83-107 (1966)

\bibitem[FK04]{FK}
{\sc B. Fayad and A.B. Katok:} \newblock{Constructions in elliptic dynamics}, \newblock{\em Ergod. Th. and Dynam. Sys.}, {\bf 24}, no. 5, 1477-1520  (2004)

\bibitem[FK14]{FKs}
{\sc B. Fayad and A.B. Katok:} \newblock{Analytic uniquely ergodic volume preserving maps on odd spheres}, \newblock{\em Comment.
Math. Helv.}, {\bf 89}(4), 963-977 (2014)


\bibitem[Her79]{H}
{\sc M.R. Herman:}
\newblock{Sur la conjugasion diff\'erentiable des diff\'eomorphismes du cercle a des rotations},
\newblock{\em Inst. Hautes \'{E}tudes Sci. Publ. Math.}, {\bf 49}, 5-233 (1979)

\bibitem[J{\"a}g09]{Jage}
{\sc T. J{\"a}ger:} \newblock{Linearization of conservative toral homeomorphisms}, \newblock{\em Invent. math.}, {\bf 176}, 601-616 (2009)

\bibitem[Kar18]{Kara}
{\sc N. Karaliolios:} \newblock{Local Rigidity of Diophantine translations in higher dimensional tori}, \newblock{\em Regular and Chaotic Dynamics}, {\bf 23}, 12-25 (2018)

\bibitem[Kat73]{K}
{\sc A. Katok:} \newblock{Ergodic perturbations of degenerate integrable Hamiltonian systems}, \newblock{\em Izv. Akad. Nauk SSSR Ser. Mat.}, {\bf 37}, 539-576 (1973) (in Russian).

\bibitem[McS93]{McS}{\sc P. McSwiggen:}
\newblock{Diffeomorphisms of the torus with wandering domains},
\newblock{\em Proc. Amer. Math. Soc.}, {\bf117}, 1175-1186 (1993)

\bibitem[Mer18]{M}{\sc
S. Merenkov:} \newblock{No round wandering domains for $C^1$ diffeomorphisms of tori}, \newblock{\em Ergod. Th. and Dynam. Sys.}, (2018)


\bibitem[MZ89]{MZ}
{\sc M. Misiurewicz and K. Ziemian:} \newblock{Rotation sets for maps of tori}, \newblock{\em J. Lond. Math. Soc.}, {\bf
40}, 490-506 (1989)

\bibitem[Nav18]{Nav}
{\sc A. Navas:}
\newblock{Wandering domains for diffeomorphisms of the k-torus: a remark on a theorem by Norton and Sullivan},  \newblock{\em Ann. Acad. Sci. Fenn. Math.}, {\bf43}, 419-424  (2018)


\bibitem[NS96]{NS}
{\sc A. Norton and D. Sullivan:} \newblock{Wandering domains and invariant conformal structures for mappings of the 2-torus}, \newblock{\em Ann. Acad. Sci. Fenn. Math.}, {\bf 21} , 51-68 (1996)

\bibitem[PaSa13]{PS}
{\sc A. Passeggi and M. Sambarino:} \newblock{Examples of Minimal Diffeomorphisms on $\mathbb{T}^2$ semi-conjugated to an Ergodic Translation}, \newblock{\em Fund. Math.} {\bf222}, no. 1, 63-97 (2013)

\bibitem[Sap03]{S}
{\sc M. Saprykina:}  \newblock{Analytic non-linearizable uniquely ergodic diffeomorphisms on T2}, \newblock{\em Ergod. Th. and Dynam. Sys.},  {\bf 23}(3), 935-955 (2003)

\bibitem[WZ18]{WZ}
{\sc J. Wang and Z. Zhang:} \newblock{The rigidity of pseudo-rotations on the two-torus and a question of Norton-Sullivan}, to appear in \newblock{\em  Geometric and Functional Analysis}, (2018)

\end{thebibliography}
\end{document}